\makeatletter \@namedef{subjclassname@2010}{%
  \textup{2010} Mathematics Subject Classification}
\newcounter{thm} \numberwithin{thm}{section}
\newtheorem{Theorem}[thm]{Theorem}
\newtheorem{Lemma}[thm]{Lemma}
\newtheorem{Corollary}[thm]{Corollary}
\newtheorem*{Claim}{Claim}
\renewcommand{\ln}[0]{\operatorname{\ell n}}
\tikzset{mybrace/.style={decoration={brace,raise=1.8mm},decorate}}
\tikzset{mybracedown/.style={decoration={brace,mirror,raise=1.8mm},decorate}}
\author[O. Roche-Newton]{Oliver Roche-Newton} \address{Johannes Kepler Universit\"{a}t\\
Linz, Austria}
\email{o.rochenewton@gmail.com}
\author[D. Zhelezov]{Dmitrii Zhelezov} \address{Johann Radon Institute for Computational and Applied Mathematics\\
Linz, Austria}
\email{dzhelezov@gmail.com}
\date{}
\begin{document}

\baselineskip=17pt

\title{Convexity, Elementary Methods, and Distances}

\date{}
\maketitle

\begin{abstract} This paper considers an extremal version of the Erd\H{os} distinct distances problem. For a point set $P \subset \mathbb R^d$, let $\Delta(P)$ denote the set of all Euclidean distances determined by $P$. Our main result is the following: if $\Delta(A^d) \ll |A|^2$ and $d \geq 5$, then there exists $A' \subset A$ with $|A'| \geq |A|/2$ such that $|A'-A'| \ll |A| \log |A|$.

This is one part of a more general result, which says that, if the growth of $|\Delta(A^d)|$ is restricted, it must be the case that $A$ has some additive structure. More specifically, for any two integers $k,n$, we have the following information: if
\[
| \Delta(A^{2k+3})| \leq |A|^n    
\]
then there exists $A' \subset A$ with $|A'| \geq |A|/2$ and
\[
| kA'- kA'| \leq k^2|A|^{2n-3}\log|A|.
\]
These results are higher dimensional analogues of a result of Hanson \cite{H}, who considered the two-dimensional case.

\end{abstract}





\section{Introduction}

Given a finite\footnote{Henceforth, all sets introduced are assumed to be finite.} point set $P \subset \mathbb R^n$, let $\Delta(P)$ denote the set of distances determined by $P$, that is,
\[
\Delta(P):= \{ \| p - q \| :p,q \in P \},
\]
where $\| \cdot \|$ is the Euclidean norm. The Erd\H{o}s distinct distance problem asks for the best possible lower bound for the size of $\Delta(P)$. In a celebrated work of Guth and Katz \cite{GK}, it was proven that
\begin{equation} \label{GKbound}
|\Delta(P)| \gg \frac{|P|}{ \log |P|}
\end{equation}
holds for all $P \subset \mathbb R^2$, thus resolving the Erd\H{o}s distinct distance problem in the plane, up to a logarithmic factor. The higher dimensional version of Erd\H{o}s's question remains open, where is it is conjectured that, for any $P \subset \mathbb R^d$,
\begin{equation} \label{Econj}
|\Delta (P)| \gg |P|^{\frac{2}{d}}.
\end{equation}

If one makes the additional restriction that the point set $P$ is a Cartesian product, this problem starts to resemble a question about expansion in the spirit of the sum-product problem. For instance, for the case when $P=A \times A$, the Guth-Katz bound \eqref{GKbound} yields
\begin{equation} \label{GKbound2}
|(A-A)^2+(A-A)^2| \gg \frac{|A|^2}{ \log |A|}.
\end{equation}
Because $|\Delta(A^d)| \geq |\Delta(A^2)|$, the bound \eqref{GKbound2} immediately implies the conjectured bound \eqref{Econj}, up to a logarithmic factor, for the Cartesian product case $P=A^d$.

One can also consider structural analogues of these questions, where the goal is to determine which sets can attain the minimal or close to the minimal number of distances. It is expected that the extremal sets have some kind of lattice-like structure. There are several conjectures making this idea more precise, but very little is known concretely. For example, Erd\H{o}s conjectured \cite{E} that, if a set of points $P$ defines $O(|P|/ \sqrt{\log |P|})$ distinct distances, there must exist a line $\ell$ such that $|\ell \cap P| \gg |P|^{1/2}$. This conjecture remains wide open, and more generally speaking, very little progress has been made towards proving that point sets determining few distances must have a special structure.

One notable exception is a paper of Hanson \cite{H} which made the first progress on the structural version of the distance problem in the case when $P=A \times A$. Hanson proved that, if $P=A \times A$ determines close to the minimum number of distances, then $A$ must exhibit some additive structure. To be precise, it was proven in \cite{H}, using a quite elementary argument, that
\[
|(A-A)^2+(A-A)^2| \gg |A-A||A|^{1/8}.
\]
In particular, this shows that
\[
|\Delta (A \times A)| \ll |A|^2 \implies |A-A| \ll |A|^{2-\frac{1}{8}}.
\]
A further quantitative improvement to this result was established by Pohoata \cite{P}. One may hope that an optimal result of this kind takes the form
\[
    |\Delta (A \times A)| \ll |A|^2 \implies  \, \forall \, \epsilon>0, \, |A-A| \ll |A|^{1+\epsilon}.
\]
The main goal of this note is to prove such a result in higher dimensions. We will prove the following.

\begin{Theorem} \label{thm:mainB}
Let $A \subset \mathbb R$. Then there exists $A' \subset A$ such that $|A'| \geq |A|/2$
\[
|\Delta(A^5)| \gg \frac{|A|^{3/2}|A'-A'|^{1/2}}{ ( \log |A|)^{1/2}}.
\]
In particular,
\[
|\Delta(A^5)| \ll |A|^2 \implies |A'-A'| \ll |A| \log |A|.
\]
\end{Theorem}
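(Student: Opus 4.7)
The plan is to identify $|\Delta(A^5)|$ with the size of a fivefold sumset and then apply a Cauchy--Schwarz plus dyadic pigeonhole argument. Set $B := (A-A)^2 = \{(a-a')^2 : a, a' \in A\}$. Since $\|p-q\|^2 = \sum_{i=1}^{5}(p_i-q_i)^2$ for $p,q \in A^5$, and since the square-root map is a bijection on $\R_{\geq 0}$, we have $|\Delta(A^5)| = |5B|$, where $5B := B+B+B+B+B$. The theorem then reduces to proving that for some $A' \subseteq A$ with $|A'| \geq |A|/2$,
\[
|5B|^2 \gg \frac{|A|^3\,|A'-A'|}{\log|A|}.
\]

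For the main inequality, I would apply Cauchy--Schwarz to the representation function $N(s) := \#\{(a_i,b_i)_{i=1}^{5} \in A^{10} : \sum_i (a_i-b_i)^2 = s\}$. Since $\sum_s N(s) = |A|^{10}$, this yields $|5B| \geq |A|^{20}/T$, where $T := \sum_s N(s)^2$ is the fivefold distance energy, counting $20$-tuples in $A^{20}$ with matching squared distances. Using the factoring $(a-b)^2 - (c-d)^2 = (a-b-c+d)(a-b+c-d)$, the quantity $T$ rewrites as a weighted count of solutions to the bilinear equation $\sum_{i=1}^{5} u_i v_i = 0$ with $u_i, v_i$ running over shifts of $A-A$, weighted by the representation function of $A-A$.

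The crux of the proof is then a dyadic pigeonhole on the representation function $r_{A-A} : A-A \to \NN$: one extracts a subset $A' \subseteq A$ with $|A'| \geq |A|/2$ whose internal differences all have multiplicity in a fixed dyadic range $[m, 2m]$, yielding the heuristic $|A'-A'| \cdot m \approx |A|^2$ for the common multiplicity $m$. Combined with a convexity/Szemer\'edi--Trotter incidence estimate for the bilinear equation $\sum_i u_i v_i = 0$ --- which exploits the strict convexity of $x \mapsto x^2$ on the geometry of $(A-A)^2$ to bound the number of aligned tuples sharply --- this produces the requisite upper bound on $T$ and converts back to the desired lower bound on $|5B|^2$.

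The main obstacle will be this final incidence step: one needs a bound on solutions to $\sum_{i=1}^5 u_i v_i = 0$ that couples the multiplicity constraints (from the dyadic pigeonhole) with the convex geometry of $(A-A)^2$ inside the product set $(A-A)\cdot(A-A)$. The $\sqrt{\log|A|}$ loss in the theorem emerges naturally from the dyadic step, but ensuring $|A'| \geq |A|/2$, rather than losing an additional logarithmic factor in the subset size, will require a refined pigeonhole or Pl\"unnecke--Ruzsa argument on top of the basic dyadic decomposition.
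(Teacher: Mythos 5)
Your opening reduction is fine and matches the paper's starting point: writing $B=(A-A)^2$, one indeed has $|\Delta(A^5)|=|5B|$. But from that point on the proposal is a plan rather than a proof, and the plan has a genuine gap at exactly the place where all the difficulty sits. The Cauchy--Schwarz step only gives $|5B|\geq |A|^{20}/T$, so everything hinges on an upper bound for the energy $T$ of the shape $T\ll |A|^{37/2}|A'-A'|^{-1/2}(\log|A|)^{1/2}$ for some half-size $A'\subset A$; no such bound is stated, let alone proved, and you acknowledge it as ``the main obstacle.'' Worse, the mechanism you propose for producing $A'$ cannot work as described: there is in general no $A'\subseteq A$ with $|A'|\geq |A|/2$ all of whose internal differences have representation multiplicity in a single dyadic range $[m,2m]$ (already for $A$ an arithmetic progression, every half-size subset contains differences of multiplicity $\Theta(1)$ and differences of multiplicity $\Theta(|A|)$). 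More fundamentally, deducing from ``$T$ is large'' that a positive-proportion (indeed half-size) subset has difference set $O(|A|\log|A|)$ is a Balog--Szemer\'edi--Gowers-type inverse statement that is essentially the theorem itself, and energy/incidence methods of the kind you invoke lose polynomial factors and never return a subset of size exactly $|A|/2$; so the route, not just the write-up, is doubtful.

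For comparison, the paper proves the result without any energy or incidence estimate on bilinear equations. Taking $A'$ to be the first $|A|/2$ elements of $A$, an elementary ``squeezing'' argument exploiting the strict convexity of $x\mapsto x^2$ (Lemmas \ref{lem:main1} and \ref{lem:maindiff}, Corollary \ref{cor:quad} with $k=1$) gives
\[
|3(A-A)^2-2(A-A)^2| \gg |A|\,|A'-A'|,
\]
after which the Ruzsa triangle inequality converts this into a lower bound for $|5(A-A)^2|=|\Delta(A^5)|$ at the cost of a factor $|2(A-A)^2|^{1/2}$, and the Guth--Katz bound $|(A-A)^2+(A-A)^2|\gg |A|^2/\log|A|$ supplies that factor; this is where the $(\log|A|)^{1/2}$ loss actually comes from, not from a dyadic decomposition. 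If you want to salvage your approach, you would need to replace the dyadic-pigeonhole-plus-incidence step with a concrete mechanism that sees the ordering and convexity of $(A-A)^2$; the paper's interval-squeezing lemma is precisely such a mechanism, and it bypasses the energy $T$ altogether.
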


One can further apply the Freiman-Ruzsa theorem with the quantitative bound of Sanders \cite{S} and verify that a large portion of $A$ may be covered by a generalized arithmetic progression (GAP) of low rank\footnote{The theorem of Sanders implies the existences of a \textit{convex coset progression}, which can be extended to a GAP with negligible losses using a John-type theorem of Tao and Vu \cite{TV}.}. Namely, 
\[
|\Delta(A^5)| \ll |A|^2 
\]
implies that there is a subset $A' \subset A$ of size $\Omega(|A|\exp(-\log^{3+o(1)} \log |A|))$ contained in a GAP of rank  $O(\log^{3+o(1)} \log |A|)$ and of size $O(|A|\exp(\log^{3+o(1)} \log |A|))$.

Theorem \ref{thm:mainB} is derived from a more general result concerning the relationship between the size of the distance set and the difference set.

\begin{Theorem} \label{thm:main}
Let $A \subset \mathbb R$ and $k\in \mathbb N$. Then there exists $A' \subset A$ such that $|A'| \geq |A|/2$
\[
|\Delta(A^{2k+3})| \gg \frac{|A|^{3/2}|k A'-k A'|^{1/2}}{k^{1/2}(\log|A|)^{1/2}}.
\]
\end{Theorem}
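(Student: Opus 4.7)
The plan is to reduce the lower bound on $|\Delta(A^{2k+3})|$ to a sumset estimate involving sumsets of convex sets, and then to transfer that estimate to the iterated difference set $|kA' - kA'|$ of a large subset $A' \subset A$ via additive combinatorics. Throughout I use that $|\Delta(A^{2k+3})|$ equals the number of distinct squared distances, since $x \mapsto x^2$ is injective on $[0, \infty)$.

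First, after translating $A$ (which preserves both $\Delta(A^{2k+3})$ and $|kA' - kA'|$), I may assume $0 \in A$. For each choice $(a_1, \ldots, a_k, b_1, \ldots, b_k, c_1, c_2, c_3) \in A^{2k+3}$, consider the pair of points
\[
p = (a_1, \ldots, a_k, b_1, \ldots, b_k, c_1, c_2, c_3), \quad q = (b_1, \ldots, b_k, a_1, \ldots, a_k, 0, 0, 0) \in A^{2k+3}.
\]
The squared distance is $\|p - q\|^2 = 2 \sum_{i=1}^k (a_i - b_i)^2 + c_1^2 + c_2^2 + c_3^2$, so if $S$ denotes the $k$-fold sumset of $(A-A)^2 = \{(a-b)^2 : a, b \in A\}$ and $T$ denotes the $3$-fold sumset of $A^2 = \{a^2 : a \in A\}$, I obtain $|\Delta(A^{2k+3})| \geq |2S + T|$. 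Both $S$ and $T$ are iterated sumsets of convex sets, being built from images of subsets of $\mathbb{R}$ under the strictly convex map $x \mapsto x^2$.

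Next, the goal is to lower-bound $|2S + T|$ by $|A|^{3/2} |kA' - kA'|^{1/2}/(k \log|A|)^{1/2}$ for some $A' \subset A$ with $|A'| \geq |A|/2$. My plan combines: (a) a convexity-based sumset estimate on $T$, for example via the Szemer\'edi--Trotter theorem or an Elekes--Nathanson--Ruzsa argument, to exploit that $A^2$ is convex and extract a factor $|A|^{3/2}$; (b) a Pl\"unnecke--Ruzsa application (iterated $k$ times) to relate the $k$-fold sumset $S$ to $|kA' - kA'|$ for a subset $A' \subset A$, contributing $|kA' - kA'|^{1/2}/k^{1/2}$; and (c) a dyadic pigeonhole to select the popular subset $A'$ inside $A$, which accounts for the $(\log|A|)^{1/2}$ loss.

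\textbf{Main obstacle.} The main obstacle is obtaining these two factors jointly. Neither convexity alone (which bounds $|T|$ in terms of $|A|$ without involving $|kA' - kA'|$) nor Pl\"unnecke alone (which bounds $|S|$ in terms of $|kA' - kA'|$ without contributing $|A|^{3/2}$) suffices to give the product form in the bound on $|\Delta(A^{2k+3})|^2$. The interaction between the two summands of $|2S + T|$ --- both arising from applying the same convex map $x \mapsto x^2$, to subsets of $A$ and of $A - A$ --- is what should allow the two bounds to combine, and correctly balancing the contributions of the $3$-dimensional ``free'' part against the $2k$-dimensional ``swapped'' part, while extracting the subset $A'$ exactly once at the pigeonhole step, is the delicate step of the argument.
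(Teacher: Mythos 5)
Your opening reduction is fine: the distance set of $A^{2k+3}$ is determined by the $(2k+3)$-fold sumset of $(A-A)^2=\{(a-b)^2:a,b\in A\}$, and your construction exhibits a subset of it. But from that point on you only list tools and explicitly defer the ``main obstacle'' --- how to make the $|A|^{3/2}$-type growth and the $|kA'-kA'|$ factor appear \emph{jointly} --- and that obstacle is exactly the content of the theorem, so what you have is a plan with a genuine gap, not a proof. Moreover, the tools you name would not close it. Pl\"unnecke--Ruzsa controls iterated sumsets of a fixed set in terms of its doubling; it offers no mechanism for passing from $k(A-A)^2$, a set of sums of \emph{squares} of differences, back to the additive structure $kA'-kA'$ of $A$ itself, and that transfer is the crux of the statement. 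Likewise, a Szemer\'edi--Trotter or Elekes--Nathanson--Ruzsa convexity bound on $3A^2$ yields a power of $|A|$ on its own but does not interact with $|kA'-kA'|$, and there is no dyadic pigeonhole in sight that would produce the subset $A'$: in the correct argument $A'$ is simply the lower half of $A$, and the logarithm has a different source (the Guth--Katz bound), not a popularity argument.

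The missing idea is an elementary ``squeezing'' argument. Order $A=\{a_1<\dots<a_N\}$, let $a<a'$ be consecutive elements with minimal gap, and for a strictly convex $f$ consider $D:=\{f(a_j-a')-f(a_j-a): j\le N/2\}$, built from the lower half $A'$ of $A$. The intervals $(f(a_j-a'),f(a_j-a))$ are disjoint with increasing lengths, so $kD-kD\subset(-kt,kt)$ with $t$ the largest element of $D$, and translates of the nonnegative part of $kD-kD$ can be placed in about $N/k$ disjoint gaps among the values $f(a_{k\ell}-a')$ with $\ell>N/(2k)$; this gives
\[
|(k+1)f(A-a')+kf(A-a)-kf(A-a')-kf(A-a)| \gg \frac{N\,|kD-kD|}{k}.
\]
Taking $f(x)=x^2$ one computes $D-D=\{-2(a'-a)\}(A'-A')$, hence $|kD-kD|=|kA'-kA'|$, and therefore
\[
|(2k+1)(A-A)^2-2k(A-A)^2| \gg \frac{|A|\,|kA'-kA'|}{k},
\]
with $A'$ the first $|A|/2$ elements of $A$. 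One then converts this mixed sum--difference bound into a bound on $|\Delta(A^{2k+3})|=|(2k+3)(A-A)^2|$ via the Ruzsa triangle inequality $|Y-Z|\le |X+Y||X+Z|/|X|$ applied with $X=2(A-A)^2$, $Y=(2k+1)(A-A)^2$, $Z=2k(A-A)^2$, and finally invokes the Guth--Katz bound $|(A-A)^2+(A-A)^2|\gg |A|^2/\log|A|$ to supply the factor $|A|^{3/2}(\log|A|)^{-1/2}$. None of these three steps (squeezing, Ruzsa triangle inequality, Guth--Katz) appears in your proposal, and the first of them is precisely the bridge between convexity and the iterated difference set that you identified as missing.
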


The proof of Theorem \ref{thm:main} actually gives us some very precise information about the set $A'$; it consists of the first $|A|/2$ elements of $A$. Theorem \ref{thm:mainB} follows from Theorem \ref{thm:main} by setting $k=1$.

One may think of Theorem \ref{thm:main} as saying the following: if the growth of the distance set $\Delta(A^{2k+3})$ is restricted, it must be the case that the original one-dimensional set $A \subset \mathbb R$ contains additive structure. For instance, Theorem \ref{thm:main} immediately gives the following consequence.
\begin{Corollary}
Let $k,n \in \mathbb N$ and suppose that $\Delta(A^{2k+3}) \leq |A|^n $. Then there exists $A' \subset A$ such that $|A'| \geq |A|/2$ and 
\[
|kA'-kA'| \ll |A|^{2n-3}k^2 \log |A|.
\]
\end{Corollary}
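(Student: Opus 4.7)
The Corollary is a direct algebraic consequence of Theorem \ref{thm:main}, so the plan is simply to invoke that theorem with the hypothesis in hand and rearrange. Concretely, I would apply Theorem \ref{thm:main} to the given set $A$ and integer $k$ to produce a subset $A' \subset A$ with $|A'| \geq |A|/2$ satisfying
\[
|\Delta(A^{2k+3})| \gg \frac{|A|^{3/2}|kA'-kA'|^{1/2}}{k^{1/2}(\log|A|)^{1/2}}.
\]
This is the same $A'$ that will witness the conclusion of the Corollary.

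Next I would substitute the hypothesis $|\Delta(A^{2k+3})| \leq |A|^n$ into the left-hand side and solve for $|kA'-kA'|$. Rearranging gives
\[
|kA'-kA'|^{1/2} \ll |A|^{n-3/2}\, k^{1/2}\, (\log|A|)^{1/2},
\]
and squaring both sides yields $|kA'-kA'| \ll |A|^{2n-3} k \log|A|$, which in particular implies the stated bound $|kA'-kA'| \ll |A|^{2n-3} k^2 \log|A|$ (this last step only throws away a factor of $k$, so the Corollary as stated is actually slightly weaker than what Theorem \ref{thm:main} yields).

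There is no real obstacle here: the entire content of the Corollary lies in Theorem \ref{thm:main}, and the deduction is a one-line rearrangement. The only thing to be careful about is keeping track of the exponents $3/2$ and $1/2$ so that the final exponent of $|A|$ comes out correctly as $2n-3$, and to note that the same subset $A'$ (namely, the one promised by Theorem \ref{thm:main}) serves as the witness for the Corollary.
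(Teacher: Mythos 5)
Your proposal is correct and matches the paper's intent exactly: the paper treats this Corollary as an immediate consequence of Theorem \ref{thm:main}, obtained by substituting the hypothesis $|\Delta(A^{2k+3})| \leq |A|^n$ and rearranging, which is precisely what you do. Your observation that the rearrangement actually gives the slightly stronger factor $k\log|A|$ in place of $k^2\log|A|$ is also accurate.
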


The proof of Theorem \ref{thm:main} makes use of an elementary ``squeezing" argument that has its origins in \cite{RSSS}, and has played a prominent role in recent progress on the sum-product problem, see \cite{B}, \cite{HRNR}, \cite{HRNS} and \cite{M}. 

\subsection{Notation}

Throughout this paper, the notation  $X\gg Y$, $Y \ll X,$ $X=\Omega(Y)$, and $Y=O(X)$ are all equivalent and mean that $X\geq cY$ for some absolute constant $c>0$. $X \approx Y$ and $X=\Theta (Y)$ denote that both $X \gg Y$ and $X \ll Y$ hold. $X \gg_a Y$ means that the implied constant is no longer absolute, but depends on $a$.

The notation $kA$ is used for the iterated sum set, and so 
\[
kA= A+ \dots +A= \{a_1 + \dots + a_k : a_1, \dots, a_k \in A \}.
\]
We use curly brackets to distinguish a set of dilates from an iterated sum set, and so we write $\{k \}A$ for the set $\{ka: a \in A \}$.

\section{Proofs of the main results}

The main goal of this section, and the rest of the paper, is to prove Theorem \ref{thm:main}. All of the other results stated in the introduction follow immediately from this.

We begin with a very simple lemma which gives control of the range for iterated difference sets.

\begin{Lemma} \label{lem:differences}
Let $k \geq 1$ be an integer. If $A \subset  (0,t)$ then $kA-kA \subset (-kt,kt)$.
\end{Lemma}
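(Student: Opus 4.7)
The plan is to observe that the conclusion follows immediately from the fact that $kA$ itself lies in the interval $(0,kt)$, and then differences of elements of an interval of length $kt$ lie in an interval of length $2kt$ centered at the origin.

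More concretely, I would first argue that $kA \subset (0,kt)$. An arbitrary element of $kA$ has the form $a_1 + \cdots + a_k$ with each $a_i \in A \subset (0,t)$. Since each summand is strictly positive, the sum is strictly positive; since each summand is strictly less than $t$, the sum is strictly less than $kt$. Hence $kA \subset (0,kt)$.

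Then, for any element $x - y \in kA - kA$ with $x,y \in kA \subset (0,kt)$, we have $-kt < -y < x - y < x < kt$, so $x-y \in (-kt,kt)$. This gives $kA - kA \subset (-kt,kt)$, which completes the argument.

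There is no real obstacle here; the lemma is a one-line consequence of the definition of the iterated sumset together with the trivial bound on sums of elements of an interval. I would write this proof in essentially one short paragraph, possibly even one sentence, and move on.
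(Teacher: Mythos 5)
Your proof is correct and follows essentially the same elementary bounding argument as the paper, which simply notes that the extreme elements of $kA-kA$ are $k\max A - k\min A < kt$ and $k\min A - k\max A > -kt$. Your intermediate step $kA \subset (0,kt)$ is just a slight repackaging of the same trivial estimate, so there is nothing further to add.
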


\begin{proof}
Let $M= \max A$ and $m= \min A$, so $M <t$ and $m>0$. The largest element of $kA-kA$ is $kM-km <kt$. The smallest element of $kA-kA$ is $km-kM >-kt$.

\end{proof}

The key ideas for proving the main results of this paper are contained in the following result, and its proof. The main innovation here is that we can make use of repeated differences of differences when applying the squeezing argument, since the range of these iterated difference sets can be conveniently controlled.

\begin{Lemma} \label{lem:main1}
Suppose that $f:\mathbb R \rightarrow \mathbb R$ is a strictly convex function and that $A=\{a_1<a_2<\dots < a_N \}$ is a set of real numbers, where $N$ is even. Let $k \geq 1$ be an integer. Let $a,a' \in A$ be two elements which satisfy
\[
a'-a= \min \{ a_{j+1} - a_j : j \in [N-1] \}.
\]
Define
\[
D:=\{f(a'+a_{j})-f(a+a_{j}) : j \leq N/2 \}.
\]
Then
\[
 |(k+1)f(a+A) + kf(a'+A) - kf(a+A)-kf(a'+A)| \gg \frac{N|kD-kD|}{k}.
 \]

\end{Lemma}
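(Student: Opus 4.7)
The plan is to first realize a translate of $kD-kD$ inside the left-hand set and then lower-bound the resulting sumset by a squeezing argument powered by the minimality of $a'-a$.

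For the inclusion, write a generic element of
\[
S := (k+1)f(a+A) + kf(a'+A) - kf(a+A) - kf(a'+A)
\]
as $\sum_{i=1}^{k+1} f(a+a_{p_i}) + \sum_{i=1}^{k} f(a'+a_{q_i}) - \sum_{i=1}^{k} f(a+a_{r_i}) - \sum_{i=1}^{k} f(a'+a_{s_i})$. Specializing $p_i := s_i$ and $r_i := q_i$ for $i \leq k$ collapses this to $f(a+a_{p_{k+1}}) + \sum_{i=1}^{k} d_{q_i} - \sum_{i=1}^{k} d_{s_i}$, where $d_j := f(a'+a_j) - f(a+a_j)$. Letting $p_{k+1}$ range over $[N]$ and $q_i, s_i$ range over $[N/2]$ shows that $S \supset f(a+A) + (kD-kD)$.

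For the lower bound on $|f(a+A) + (kD-kD)|$, the minimality of $a'-a$ enters via the three-slope inequality for strictly convex $f$ applied to $a+a_l < a'+a_l \leq a+a_{l+1}$ (the latter inequality because $a_{l+1}-a_l \geq a'-a$). It yields $f(a+a_{l+1}) - f(a+a_l) \geq \tfrac{a_{l+1}-a_l}{a'-a}\,d_l \geq d_l$ whenever $d_l \geq 0$, and in a symmetric form forces $(d_j)$ to be strictly increasing. Hence for every $j > N/2$, the consecutive gap of $f(a+\cdot)$ at $a_j$ strictly exceeds $d_{N/2} = \max D$. Now run a multiplicity bound on the sumset $f(a+A') + (kD-kD)$, where $A' := \{a_{N/2+1},\ldots,a_N\}$: for a fixed value $v$, its multiplicity is $|\{j > N/2 : f(a+a_j) \in v - (kD-kD)\}|$, and by Lemma~\ref{lem:differences} the set $v - (kD-kD)$ lies in an interval of length at most $2k(d_{N/2} - d_1) \leq 2kd_{N/2}$. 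By the spacing bound, this interval catches at most $O(k)$ values $f(a+a_j)$ with $j > N/2$, and the standard inequality $|X+Y| \geq |X||Y|/M$ yields
\[
|S| \geq |f(a+A') + (kD-kD)| \gg \frac{N|kD-kD|}{k}.
\]

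The main obstacle is the implicit assumption $d_1 \geq 0$ in the diameter estimate, which is the same as requiring $f(a+\cdot)$ to be monotonically increasing on the relevant portion of $A$. Since a strictly convex $f$ on $\mathbb R$ has a unique critical point, at least half of $A$ lies on a side where $f(a+\cdot)$ is monotone; one runs the argument on that half, with a symmetric version if $f$ is decreasing there. Tracking signs carefully through this case split is the main technical nuisance, but it preserves the constants.
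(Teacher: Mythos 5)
Your core argument is correct and is essentially the paper's proof. The specialisation $p_i=s_i$, $r_i=q_i$ giving $f(a+A)+kD-kD$ inside the left-hand set is exactly the containment the paper uses in its last line, and your counting step rests on the same two facts as the paper's: the gaps of $f(a+\cdot)$ on the upper half of $A$ exceed $\max D=d_{N/2}$ (via minimality of $a'-a$ and convexity), while $kD-kD$ sits in an interval of length $O(kd_{N/2})$ (Lemma \ref{lem:differences}). The only difference is bookkeeping: the paper thins the upper half to the points $a_{k\ell}$ with $N/(2k)<\ell\le N/k-1$, so that the translates of $(kD-kD)_+\subset[0,kt)$ are pairwise disjoint, whereas you keep the whole upper half and bound the number of representations of each sum by $O(k)$. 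Both counts give $\gg N|kD-kD|/k$, so this is a cosmetic variation rather than a different route.

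The one place to be careful is the sign issue you raise at the end. You are right that the argument needs $d_1\ge 0$ (i.e.\ $D\subset(0,t)$), and it is worth noting that the paper's own proof makes the same implicit assumption: it treats the intervals $(f(a+a_j),f(a'+a_j))$ as genuine, non-overlapping intervals of increasing length and applies Lemma \ref{lem:differences} to $D$, which requires $D\subset(0,t)$; in the paper's application of this lemma (Corollary \ref{cor:exp}, with $f(x)=e^x$) the function is increasing, so the issue is vacuous there (a similar orientation question arises in Lemma \ref{lem:maindiff}). However, your sketched repair does not prove the statement as written. If $f(a+\cdot)$ is decreasing on the relevant range, reflecting $x\mapsto -x$, or running the argument on the half of $A$ where $f(a+\cdot)$ is monotone, exchanges the roles of the two halves of $A$: the bound you would obtain is in terms of the differences $f(a'+a_j)-f(a+a_j)$ taken over the \emph{other} half (those of largest magnitude), not the set $D$ of the statement. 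The squeezing genuinely exploits the asymmetry that the summands come from the half where these differences are smallest while the translates come from the half with the largest gaps, and the reflection destroys exactly this. So ``preserves the constants'' is too quick: the honest fixes are either to add the hypothesis that the relevant differences are nonnegative (equivalently, prove the lemma for $f$ increasing on the range in question, which is all that Corollary \ref{cor:exp} needs), or to prove the mirrored variant separately and accept that it bounds a different difference set.
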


\begin{proof}
Note that the intervals 
\begin{equation} \label{ints}
(a+a_{j}, a'+a_{j}), \,\,\,\, j = 1,2,\dots  N
\end{equation}
all have equal length $a'-a$ and do not overlap. It then follows from the strict convexity of $f$ that the intervals
\begin{equation} \label{increasing}
(f(a+a_{j}), f(a'+a_{j})), \,\,\,\, j =1,2, \dots, N
\end{equation}
have strictly increasing lengths as $j$ increases, and do not overlap. We use this information to prove the following claim.

\begin{Claim}
Define
\[
t:=f(a'+a_{ \frac{N}{2}})-f(a+a_{ \frac{N}{2} }).
\]
Then, for any integer $ \frac{N}{2k} < \ell \leq \frac{N}{k}-1$, the interval
\[
(f(a+a_{k\ell}), f(a+ a_{k(\ell+1)}))
\]
has length at least $kt$.
\end{Claim}

\begin{proof} The interval $(f(a+a_{k\ell}), f(a+ a_{k(\ell+1)}))$ contains the following $k$ disjoint intervals
\[
(f(a+a_{k\ell}), f(a'+ a_{k\ell})), (f(a+a_{k\ell+1}), f(a'+ a_{k\ell+1})) , \dots ,(f(a+a_{k\ell+(k-1)}), f(a'+ a_{k\ell+(k-1)})).
\]
Since the intervals listed in \eqref{increasing} are strictly increasing in length and $k \ell  > N/2$, each of these intervals has length at least $t$, and the total length of the intervals is at least $kt$.
\end{proof}

Recall that
\[
D:=\{f(a'+a_{j})-f(a+a_{j}) : j \leq N/2 \}.
\]
Once again, the elements of $D$ are strictly increasing with $j$, and so the largest element in $D$ is
\[
 f(a'+a_{ \frac{N}{2}})-f(a+a_{ \frac{N}{2} })=t.
\]
Therefore, by Lemma \ref{lem:differences},
\[
kD - kD \subset (-kt, kt).
\]
We will only use the non-negative elements of this difference set, i.e. the set
\[
(kD - kD)_+:= (kD - kD) \cap [0, kt).
\]
Since $kD - kD$ is a difference set it is symmetric, and it follows that
\[
|(kD - kD)_+| \geq \frac{|kD - kD|}{2}.
\]
Now define
\[ 
A_1:=\left \{ a_{k\ell } :  \frac{N}{2k} < \ell \leq \frac{N}{k}-1\right \}
\]
and consider the sum set
\[
f(a+A_1) + (kD - kD)_+ = \bigcup_{ \frac{N}{2k} < \ell \leq \frac{N}{k}-1} f(a+a_{k\ell})+(kD - kD)_+.
\]
Since $(kD - kD)_+ \subset [0, kt)$ and the interval 
\[
(f(a+a_{k\ell}), f(a+ a_{k(\ell+1)}))
\]
has length at least $kt$ (from the claim), it follows that the sets $f(a+a_{k\ell})+(kD - kD)_+$ are pairwise disjoint as $ \ell$ varies. Therefore, we have
\begin{align*}
 |f(a+A) + kD - kD| &\geq  |f(a+A_1) + (kD - kD)_+| 
 \\ & \geq  \sum_{\frac{N}{2k} < \ell \leq \frac{N}{k}-1} | (kD - kD)_+|
 \\& \gg \frac{N|kD-kD|}{k}.
\end{align*}
Since $f(a+A) + kD - kD \subset (k+1)f(a+A) + kf(a'+A) - kf(a+A)-kf(a'+A)$, the proof is complete.

\end{proof}

A small modification of the proof of Lemma \ref{lem:main1} gives the following statement, where a shift by an element of $-A$ is instead used. This is particularly relevant in the context of applications bounding the size of a set of distances given by a Cartesian product $A^d$.

\begin{Lemma} \label{lem:maindiff}
Suppose that $f:\mathbb R \rightarrow \mathbb R$ is a strictly convex function and that $A=\{a_1<a_2<\dots < a_N \}$ is a set of real numbers, with $N$ even. Let $k \geq 1$ be an integer. Let $a,a' \in A$ be two elements which satisfy
\[
a'-a= \min \{ a_{j+1} - a_j : j \in [N-1] \}.
\]
Define
\[
D:=\{f(a_{j}-a')-f(a_{j}-a) : j \leq N/2 \}.
\]
Then
\[
 |(k+1)f(A-a') + kf(A-a) - kf(A-a')-kf(A-a)| \gg \frac{N|kD-kD|}{k}.
 \]

\end{Lemma}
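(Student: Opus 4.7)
The plan is to follow the proof of Lemma \ref{lem:main1} almost verbatim, substituting the shifts $-a, -a'$ in place of $+a, +a'$. The key geometric observation carries over unchanged: the $N$ intervals $(a_j - a', a_j - a)$ all have length $a' - a$, and they are pairwise disjoint because the minimal-gap assumption forces $a_j - a \leq a_{j+1} - a'$. As $j$ increases these intervals translate to the right through the domain of $f$, so by strict convexity the function $a_j \mapsto f(a_j - a) - f(a_j - a')$ is strictly increasing; combined with monotonicity of $f$ on the relevant range (as used implicitly in Lemma \ref{lem:main1}), the image intervals $(f(a_j - a'), f(a_j - a))$ have strictly increasing lengths and are themselves pairwise disjoint.

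I would then reprove the embedded claim in identical form. Setting $t := f(a_{N/2} - a) - f(a_{N/2} - a')$, for each integer $\ell$ with $N/(2k) < \ell \leq N/k - 1$, the interval $(f(a_{k\ell} - a'), f(a_{k(\ell+1)} - a'))$ contains the $k$ disjoint image intervals indexed by $k\ell, k\ell+1, \ldots, k\ell + k - 1$, each of length at least $t$ (since each index exceeds $N/2$), so this ambient interval has length at least $kt$. A single sign adjustment is then required: because $a' > a$ and $f$ is increasing, every element of $D$ is \emph{negative} with absolute value at most $t$, so $-D \subset (0, t)$ and Lemma \ref{lem:differences} applied to $-D$ yields $kD - kD = k(-D) - k(-D) \subset (-kt, kt)$.

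To conclude, let $(kD - kD)_+ := (kD - kD) \cap [0, kt)$; by symmetry $|(kD - kD)_+| \geq |kD - kD|/2$. Setting $A_1 := \{a_{k\ell} : N/(2k) < \ell \leq N/k - 1\}$, the translates $f(a_{k\ell} - a') + (kD - kD)_+$ are pairwise disjoint as $\ell$ varies, because $(kD - kD)_+ \subset [0, kt)$ while consecutive base points $f(a_{k\ell} - a')$ are separated by at least $kt$ by the claim. Summing over $\ell$ gives
\[
\bigl|f(A - a') + (kD - kD)\bigr| \geq |A_1| \cdot |(kD - kD)_+| \gg \frac{N}{k} |kD - kD|,
\]
and the inclusion $f(A - a') + kD - kD \subset (k+1)f(A - a') + kf(A - a) - kf(A - a') - kf(A - a)$ (which follows from $D \subset f(A - a') - f(A - a)$) closes the argument.

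The only genuine obstacle is notational: because $+a$ is swapped for $-a$, one has to check that the sliding direction of the intervals, the sign of the elements of $D$, and the application of Lemma \ref{lem:differences} remain consistent. Once this bookkeeping is done by passing to $-D$ where needed, every geometric and combinatorial estimate transfers verbatim from Lemma \ref{lem:main1}.
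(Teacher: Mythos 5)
Your proposal is correct and follows essentially the same route as the paper, whose own proof of Lemma \ref{lem:maindiff} simply says to rerun the argument of Lemma \ref{lem:main1} with the intervals $(a_j-a',a_j-a)$ in place of $(a+a_j,a'+a_j)$. Your explicit bookkeeping---noting that the elements of $D$ are now negative and applying Lemma \ref{lem:differences} to $-D$, and using the base points $f(a_{k\ell}-a')$---is exactly the ``appropriate modification'' the paper leaves implicit, so nothing further is needed.
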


\begin{proof}
The proof is essentially the same as that of Lemma \ref{lem:main1}, except for a small modification at the outset. Instead of the intervals given in \eqref{ints}, we consider the intervals.
\[
(a_{j}-a', a_{j}-a), \,\,\,\, j = 1,2,\dots  N
\]
These intervals are disjoint and have increasing lengths. The remainder of the proof follows the proof of Lemma \ref{lem:main1}, with appropriate modifications.
\end{proof}


Next, we apply Lemma \ref{lem:maindiff} for the strictly convex function $f(x)=x^2$, giving the following result.

\begin{Corollary} \label{cor:quad}
Let $N$ be an even integer, $A=\{a_1<a_2<\dots <a_N \}$ and $A'=\{a_1<a_2<\dots < a_{N/2} \}$. Then for any $k \in \mathbb N$,
\[
|(2k+1)(A-A)^2-2k(A-A)^2| \gg \frac{|A||k A'-k A'|}{k}.
\]
\end{Corollary}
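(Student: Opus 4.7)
The plan is to apply Lemma \ref{lem:maindiff} with the strictly convex function $f(x)=x^2$, pick $a,a'\in A$ to be a consecutive pair realizing the minimal gap (as required by the lemma), and then do two small calculations: one identifying $|kD-kD|$ with $|kA'-kA'|$, and one showing that the set on the left of the conclusion of Lemma \ref{lem:maindiff} is contained in $(2k+1)(A-A)^2-2k(A-A)^2$.

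First I would compute $D$ explicitly. By direct expansion,
\[
(a_j-a')^2-(a_j-a)^2 \;=\; -2(a'-a)\,a_j+(a'^2-a^2),
\]
so $D$ is the affine image $-2(a'-a)\cdot A' + (a'^2-a^2)$ of the set $A'=\{a_1,\dots,a_{N/2}\}$ in the statement of the corollary. Since the linear coefficient $-2(a'-a)$ is nonzero, affine transformations preserve iterated sum- and difference-set cardinalities, hence $|kD-kD|=|kA'-kA'|$. This step is the only place where the choice $f(x)=x^2$ plays an essential role, as the quadratic collapses the expression for $D$ to something linear in $a_j$.

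Next, since $a,a'\in A$, we have $f(A-a')=(A-a')^2\subset (A-A)^2$ and likewise $f(A-a)\subset(A-A)^2$. Counting the positive and negative terms,
\[
(k+1)f(A-a')+kf(A-a)-kf(A-a')-kf(A-a)\;\subset\;(2k+1)(A-A)^2-2k(A-A)^2,
\]
so the left-hand side in Lemma \ref{lem:maindiff} has cardinality at most $|(2k+1)(A-A)^2-2k(A-A)^2|$.

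Combining these two observations with Lemma \ref{lem:maindiff} (and using $N=|A|$) yields
\[
\bigl|(2k+1)(A-A)^2-2k(A-A)^2\bigr|\;\gg\;\frac{N|kD-kD|}{k}\;=\;\frac{|A|\,|kA'-kA'|}{k},
\]
which is the stated inequality. There is no real obstacle here: once Lemma \ref{lem:maindiff} is granted, the proof is a direct substitution plus the affine-invariance calculation and two trivial set inclusions.
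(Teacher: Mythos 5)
Your proposal is correct and is essentially the paper's own argument: apply Lemma \ref{lem:maindiff} with $f(x)=x^2$, observe that $D$ is (up to a nonzero dilation and a shift) a copy of $A'$ so that $|kD-kD|=|kA'-kA'|$, and use the inclusion of $(k+1)f(A-a')+kf(A-a)-kf(A-a')-kf(A-a)$ in $(2k+1)(A-A)^2-2k(A-A)^2$. The only cosmetic difference is that the paper computes $D-D=\{-2(a'-a)\}(A'-A')$ whereas you compute $D$ itself as an affine image of $A'$, and you make explicit the set inclusion that the paper leaves implicit.
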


\begin{proof}
Apply Lemma \ref{lem:maindiff} with $f(x)=x^2$ and $k \in \mathbb N$. We obtain the bound
\begin{equation} \label{application}
|(2k+1)(A-A)^2-2k(A-A)^2| \gg \frac{|A||kD-kD|}{k},
\end{equation}
where $D$ is the set defined in the statement of Lemma \ref{lem:maindiff}. We take a closer look at the set $D$ in this case. Note that, for two distinct elements $a,a' \in A$, we have
\begin{align*}
D-D&=\{f(b-a')-f(b-a)-f(c-a')+f(c-a) : b,c \in A' \} 
\\& = \{(b-a')^2-(b-a)^2-(c-a')^2+(c-a)^2 : b,c \in A' \}
\\ &=\{-2(a'b-ab-a'c+ac)) : b,c \in A' \}
\\ &=\{-2(a'-a)\}(A'-A').
\end{align*}
Therefore,
\[
|kD-kD|=|k(D-D)|=|k(A'-A')|=|kA'-kA'|.
\]
It then follows from \eqref{application} that
\[ 
|(2k+1)(A-A)^2-2k(A-A)^2| 
\gg \frac{|A||kD-kD|}{k}  = \frac{|A||k A'-k A'|}{k}.
\]

\end{proof}

We are now ready to prove Theorem \ref{thm:main}. It is restated below in a slightly different form, recording some additional information about the subset $A' \subset A$.

\begin{Theorem}
Let $N$ be an even integer, let $A= \{a_1 < \dots < a_N \}$ be a set of real numbers and let $A'= \{ a_1< \dots < a_{N/2} \}$. Then
\begin{equation} \label{goal}
|\Delta(A^{2k+3})| \gg \frac{|A|^{3/2}|k A'-k A'|^{1/2}}{k^{1/2}(\log|A|)^{1/2}}.
\end{equation}
\end{Theorem}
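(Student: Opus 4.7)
The plan is to combine Corollary \ref{cor:quad} with the Guth--Katz planar distance bound \eqref{GKbound} through a Ruzsa-type inequality. Writing $B = (A-A)^2$, the squared distance set is $\Delta(A^{2k+3})^2 = (2k+3)B$, so, squaring being injective on $[0,\infty)$, $|\Delta(A^{2k+3})| = |(2k+3)B|$. Corollary \ref{cor:quad} already gives $|(2k+1)B - 2kB| \gg |A|\,|kA'-kA'|/k$, and the remaining task is to convert this lower bound on a $(4k+1)$-fold signed sumset into one on the $(2k+3)$-fold sumset $(2k+3)B$.

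For that conversion I would invoke the Ruzsa-type inequality $|X - Y|\cdot|Z| \leq |X + Z|\cdot|Y + Z|$, proved by fixing for each $d \in X - Y$ a representation $d = x(d) - y(d)$ and sending $(d, z) \mapsto (x(d) + z, y(d) + z)$ injectively. Applied with $X = (2k+1)B$, $Y = 2kB$, and $Z = 2B$, this yields $|(2k+1)B - 2kB|\cdot|2B| \leq |(2k+3)B|\cdot|(2k+2)B|$. Since $0 \in B$, one has $(2k+2)B \subseteq (2k+3)B$, so the right-hand side is at most $|(2k+3)B|^2 = |\Delta(A^{2k+3})|^2$.

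The last ingredient is a good lower bound on $|2B|$. Here I would use the observation that $2B = (A-A)^2 + (A-A)^2$ is precisely the set of squared distances determined by $A \times A \subset \mathbb{R}^2$, so $|2B| = |\Delta(A \times A)|$, and Guth--Katz \eqref{GKbound} then gives $|2B| \gg |A|^2/\log|A|$. Stringing the three estimates together yields
\[
|\Delta(A^{2k+3})|^2 \;\geq\; |(2k+1)B - 2kB|\cdot|2B| \;\gg\; \frac{|A|^3\,|kA'-kA'|}{k\log|A|},
\]
and the claimed bound follows by taking square roots. I expect the main conceptual step, the one most easily missed, is noticing that the factor $|2B|$ produced by Ruzsa's inequality is itself the size of a planar distance set, so that Guth--Katz can be brought in to supply the crucial $|A|^2/\log|A|$ saving over the trivial $|2B| \geq |A|$; without this observation the naive bound coming from $|B|\geq|A|$ alone falls short by a factor of $|A|^{1/2}/(\log|A|)^{1/2}$.
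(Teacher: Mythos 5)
Your proposal is correct and follows essentially the same route as the paper: Corollary \ref{cor:quad}, then the Ruzsa triangle inequality applied with $Z=2(A-A)^2$ (the paper's $X=2(A-A)^2$, $Y=(2k+1)(A-A)^2$, $Z=2k(A-A)^2$ is the identical application), the inclusion $(2k+2)B\subseteq(2k+3)B$, and finally the Guth--Katz bound $|2(A-A)^2|\gg|A|^2/\log|A|$ before taking square roots. Your key observation that the factor $|2B|$ is itself a planar distance set so that Guth--Katz supplies the saving is exactly the paper's step \eqref{GKbound2}.
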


\begin{proof}

Apply Corollary \ref{cor:quad}, giving the bound
\begin{equation} \label{step1}
|(2k+1)(A-A)^2-2k(A-A)^2| \gg \frac{|A||kA'-kA'|}{k}.
\end{equation}
Next, we make use of the following fact, which is a consequence of the Ruzsa Triangle Inequality; for any finite sets $X,Y,Z \in \mathbb R$,
\begin{equation} \label{RTI2}
|Y-Z|\leq\frac{|X+Y||X+Z| }{|X|}.
\end{equation}
Applying \eqref{RTI2} with 
\[
X=2(A-A)^2, \,\,\,\,\, Y=(2k+1)(A-A)^2 \,\,\,\, \text{and} \,\,\,\,\, Z=2k(A-A)^2
\]
yields
\begin{align*}
  \frac{|(2k+3)(A-A)^2|^2 }{|2(A-A)^2|} &\geq    \frac{|(2k+3)(A-A)^2||(2k+2)(A-A)^2| }{|2(A-A)^2|}
  \\& \geq |(2k+1)(A-A)^2-2k(A-A)^2| 
  \\ &\gg \frac{|A||kA'-kA'|}{k},
\end{align*}
where the last inequality is \eqref{step1}. Rearranging this inequality and applying \eqref{GKbound2}, we conclude that
\[
|\Delta(A^{2k+3})| = |(2k+3)(A-A)^2| \gg (k^{-1}|A||kA'-kA'| |2(A-A)^2|)^{1/2} \gg \frac{|A|^{3/2}|kA'-kA'|^{1/2}}{k^{1/2}(\log |A|)^{1/2}}.
\]
\end{proof}

Finally, we give an analogue of Corollary \ref{cor:quad} for a different set; an expander determined by taking iterated sums of a product set. Once again, limited growth for this expander implies that the original set has additive structure. In the geometric context, one may see this as an inverse result for Cartesian product sets $A^d$ which determine few distinct dot products.

\begin{Corollary} \label{cor:exp}
Let $N$ be an even integer, $X=\{x_1<x_2<\dots < x_N \}$ and $X'=\{x_1<x_2<\dots < x_{N/2} \}$. Suppose also that $x_1 >0$. Then for any $k \in \mathbb N$,
\[
|(2k+1) XX-2k XX| \gg \frac{|X||k X'-k X'|}{k}.
\]
\end{Corollary}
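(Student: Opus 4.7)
The plan is to mirror the proof of Corollary \ref{cor:quad} by applying Lemma \ref{lem:main1} to the strictly convex function $f(x) = e^x$ and the set $A = \log X := \{\log x_1 < \log x_2 < \cdots < \log x_N\}$. Since $x_1 > 0$ all logarithms are defined, and since $\log$ is monotone increasing, $A$ is ordered consistently with $X$ and its initial segment $\{a_j : j \leq N/2\}$ is exactly $\log X'$.

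The key identity is $f(a + \log x_j) = e^a \cdot x_j$, which converts additive shifts into multiplicative dilations. Let $i$ be the index of the consecutive pair in $X$ minimizing the ratio $x_{i+1}/x_i$, equivalently the log-gap $\log x_{i+1} - \log x_i$, and take $a = \log x_i$ and $a' = \log x_{i+1}$, so that $a, a'$ realize the minimum consecutive gap in $A$ as required by Lemma \ref{lem:main1}. Then $f(a+A) = \{x_i\} X$, $f(a'+A) = \{x_{i+1}\} X$, and
\[
D \;=\; \{x_{i+1} x_j - x_i x_j : j \leq N/2\} \;=\; \{x_{i+1} - x_i\} X'.
\]
Because $x_{i+1} - x_i > 0$ is a nonzero scalar, $D$ is a dilate of $X'$, and hence $|kD - kD| = |kX' - kX'|$.

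Plugging into the conclusion of Lemma \ref{lem:main1} yields
\[
\left| (k+1)\{x_i\}X + k\{x_{i+1}\}X - k\{x_i\}X - k\{x_{i+1}\}X \right| \;\gg\; \frac{|X|\,|kX' - kX'|}{k}.
\]
Every element of $\{x_i\}X \cup \{x_{i+1}\}X$ lies in $XX$, so the set on the left is contained in $(2k+1)XX - 2k XX$, and the desired lower bound transfers. The argument presents no real obstacle beyond identifying the right convex function: the entire content is already encapsulated in Lemma \ref{lem:main1}, and this corollary is simply that lemma filtered through the change of variables $X = \exp A$.
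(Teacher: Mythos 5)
Your proposal is correct and follows essentially the same route as the paper: apply Lemma \ref{lem:main1} with $f(x)=e^x$ and $A=\ln X$, identify $D$ as a dilate of $X'$ so that $|kD-kD|=|kX'-kX'|$, and observe that the resulting sumset sits inside $(2k+1)XX-2kXX$. If anything, you are more explicit than the paper about choosing $a,a'$ as the pair minimizing the ratio $x_{i+1}/x_i$ and about the final containment (which the paper's closing remark about the distributive property also acknowledges), so there is nothing to correct.
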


\begin{proof}
Apply Lemma \ref{lem:main1} with $f(x)=e^x$, $A= \ln X$, and $k \in \mathbb N$. We can calculate that
\[
|D-D|=|X'-X'|,
\]
where $D$ is the set defined in Lemma \ref{lem:main1}. Therefore, Lemma \ref{lem:main1} gives
\[
|(2k+1) XX- 2k XX | \gg \frac{|X||kD-kD|}{k}
= \frac{|X||k X'-kX'|}{k}.
\]

\end{proof}

In fact, we have been rather careless in this proof. If we instead paid more attention to the distributive property, we would obtain the bound
\[
|\{x\}((k+1) X- k X) + \{x'\}(k X- k X)| \gg \frac{|X||kX'-kX'|}{k}
\]
for some $x,x' \in X$.

\section*{Acknowledgements}

Oliver Roche-Newton was supported by the Austrian Science Fund FWF Project P 34180.

\end{document}